\theoremstyle{plain}
\newtheorem{Theorem}{Thm}[section]
\newtheorem{Thm}[Theorem]{Theorem}
\newtheorem{Lem}[Theorem]{Lemma}
\newtheorem{Cor}[Theorem]{Corollary}
\theoremstyle{definition}
\newtheorem{Exm}[Theorem]{Example}
\newtheorem{Rem}[Theorem]{Remark}
\newcommand\N{\mathbb{N}}
\newcommand\Q{\mathbb{Q}}
\newcommand\R{\mathbb{R}}
\DeclareMathOperator\diag{diag}
\DeclareMathOperator\lspan{span}
\begin{document}
\title{Generic Spectrahedral Shadows}
\author{Rainer Sinn}
\author{Bernd Sturmfels}

\begin{abstract}
Spectrahedral shadows are projections of linear sections of
the cone of positive semidefinite matrices. 
We characterize the polynomials that vanish on the boundaries of these convex sets
when both the section and the projection are generic.
\end{abstract}

\maketitle

\section{Introduction}

A {\em spectrahedral shadow} is a convex set $S$ in $\R^d$ that admits a representation
\[
S \,=\, \bigl\{\, (x_1,\ldots,x_d)\in\R^d \,\, \big| \, \, \exists\; (y_1,\ldots,y_p)\in\R^p\, \colon \sum_{i=1}^d x_iA_i + \sum_{j=1}^p y_jB_j +C \,\succeq \,0 \,\bigr\},
\]
where $A_1,\ldots,A_d$, $B_1,\ldots,B_p$ and $C$ are real symmetric $n\times n$ matrices,
and the symbol ``$ \succeq$'' means that the matrix is positive semidefinite.
Such sets are the feasible regions in semidefinite programming \cite{BPT}, and 
understanding their geometry is of considerable interest in optimization theory.
For instance, a longstanding conjecture asserts that every closed convex 
semialgebraic subset of $\R^d$ is a spectrahedral shadow (cf.~\cite{HN}).
This conjecture was recently proved for $2$-dimensional sets by Scheiderer, see \cite{Scheiderer}.

In this article we assume that $A_i$, $B_j$ and $C$ are {\em generic}, {\it i.e.}~the
 tuple of matrices that defines $S$ lies
outside a certain discriminantal hypersurface in the space of $(d+p+1)$-tuples of
symmetric $n \times n$-matrices.  This is the discriminant  whose non-vanishing characterizes
{\em transversal symmetroids} as in \cite[\S 1]{Quartic}.
The symmetroid defined by the vanishing of the polynomial $\det(\sum x_iA_i + \sum y_jB_j + C)$ is {\em transversal} if all points of corank $1$ are smooth on this algebraic hypersurface. A spectrahedron
whose symmetroid is transversal is either empty or full-dimensional.
Therefore, when the matrices $A_i, B_j, C$ are generic,
the spectrahedral shadow $S$ inherits this property from the spectrahedron upstairs.
Namely, $S$ is 
either empty or full-dimensional in $\R^d$.
If these hypotheses hold then we call $S$ a {\em generic
spectrahedral shadow of type $(n,d,p)$}.

Given a linear functional that minimizes in a boundary point $x\in\partial S$, the linear functional pulled back via the projection minimizes along a face of the spectrahedron upstairs.
By {\em rank of $x$} we mean the rank of a matrix $\sum x_iA_i + \sum y_jB_j+C$ in the relative interior of that face. This does not depend on the choice of $y $ because the rank is constant on the relative interior of faces of a spectrahedron; see Ramana-Goldman~\cite{RamGol}.

The boundary $\partial S$ of 
a non-empty generic spectrahedral shadow $S$ is a semi-algebraic set of codimension $1$
in $\R^d$;
see \cite[Corollary 2.6]{Sin}.
Up to scaling, there exists a unique squarefree polynomial $\Phi_S \in \R[x_1,\ldots,x_d]$
that vanishes on $\partial S$. The zero set of $\Phi_S$ is 
denoted $\partial_{\rm alg}(S)$ and is called the {\em algebraic boundary} of $S$.
The irreducible components of $\partial_{\rm alg}(S)$ are the zero sets of the
irreducible factors of $\Phi_S$.
Our main result characterizes the number and degrees of these 
factors in terms of $n,d$ and~$p$.

We shall answer the following questions concerning the boundary structure of $S$:
\begin{compactitem}
\item What ranks occur in the boundary of $S$?
 \item How many irreducible components are there in the algebraic boundary of $S$?
\item What are the degrees of these hypersurfaces?
\end{compactitem}
The following result is our main theorem.

\begin{Thm}\label{Thm:algBound}
Let $S$ be a generic spectrahedral shadow of type $(n,d,p)$.
The rank $r$ of any general point in the boundary $\partial S$ of $S$ lies in the range
that is given by
\begin{equation}
\label{eq:patakirange}
\binom{n-r+1}{2}  \,\leq\,  p+1 \quad {\rm and} \quad
\binom{r+1}{2} \, \leq\,  \binom{n+1}{2}-(p+1).
\end{equation}
The points of rank $r$ form an irreducible algebraic hypersurface.
Its degree is independent of $d$; it is equal to
$\delta(p+1,n,r)$.
The algebraic boundary of $S$ is the union of these irreducible hypersurfaces,
one for each rank $r$ that occurs for a general point in~$\partial S$.
\end{Thm}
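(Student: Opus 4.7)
The proof splits into three parts: the Pataki range, the irreducibility decomposition, and the degree formula. I work throughout in the lifted picture $\tilde S\subset\R^{d+p}$ with $M(x,y)=\sum x_iA_i+\sum y_jB_j+C$, and let $\pi\colon\R^{d+p}\to\R^d$ denote the projection to the $x$-coordinates.

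For the Pataki inequalities, I would perform two complementary dimension counts on rank-$r$ structures in $\tilde S$. The full rank-$\le r$ stratum is the transverse intersection of $\tilde S$ with the pullback of the symmetric determinantal variety of codimension $\binom{n-r+1}{2}$, hence has dimension $d+p-\binom{n-r+1}{2}$. For $\pi$ to map it onto a codimension-one subset of $S$ this dimension must be at least $d-1$, which gives $\binom{n-r+1}{2}\le p+1$. A single rank-$r$ face of $\tilde S$, obtained by fixing its kernel $K\in\gr(n-r,n)$, lies inside a linear subspace of $\Sym_n$ of dimension $\binom{r+1}{2}$, so its own dimension is at most $d+p+\binom{r+1}{2}-\binom{n+1}{2}$. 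Since any convex subset of $S$ of dimension strictly less than $d$ is contained in $\partial S$, requiring each such face to project into $\partial S$ forces its dimension to be at most $d-1$, which rearranges to $\binom{r+1}{2}\le\binom{n+1}{2}-(p+1)$.

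For the irreducible decomposition of $\partial_{\mathrm{alg}}(S)$, set
\[
\V_r\,=\,\overline{\{(x,y)\in\R^{d+p}:\mathrm{rank}\,M(x,y)\le r\}}.
\]
This is a generic linear section of the classical symmetric rank-$\le r$ determinantal variety, and hence irreducible. Its image $\pi(\V_r)\subset\R^d$ is therefore irreducible, and the two Pataki bounds together force this image to have dimension exactly $d-1$ (with the second inequality bounding the image from above via the face argument, and the first from below via the stratum argument). Different values of $r$ produce distinct hypersurfaces because the rank is a birational invariant of a general boundary point in the sense of Ramana--Goldman, and every general boundary point arises from a unique rank; hence $\partial_{\mathrm{alg}}(S)$ is exactly the union of the $\overline{\pi(\V_r)}$ over admissible $r$, with no additional components.

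For the degree, I would intersect $\overline{\pi(\V_r)}$ with a generic line $L\subset\R^d$ and pull back via $\pi$. The preimage $\pi^{-1}(L)$ is an affine $(p+1)$-space in $\R^{d+p}$, whose image under $M$ is a generic $(p+1)$-dimensional affine slice of $\Sym_n$. Assuming $\pi|_{\V_r}$ is birational --- the expected behavior for generic data --- the degree of $\overline{\pi(\V_r)}$ equals the number of rank-$r$ points in this slice, which is $\delta(p+1,n,r)$ by its classical intersection-theoretic definition; independence of $d$ is then immediate since only the $(p+1)$-slice enters. The main obstacle lies in this last step: establishing that $\pi|_{\V_r}$ is truly generically one-to-one, that all $\delta(p+1,n,r)$ intersection points remain on the algebraic boundary rather than being absorbed into lower rank strata or excess components, and that the boundary cases of the Pataki inequalities do not introduce unexpected multiplicities. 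This requires a careful genericity argument on the tuple $(A_i,B_j,C)$ combined with the classical degree formulas for symmetric determinantal loci.
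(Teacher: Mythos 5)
Your approach is genuinely different from the paper's: you stay in the primal lifted picture and try to project the determinantal loci $\V_r\subset\R^{d+p}$ directly, whereas the paper dualizes. It writes the dual body $S^o$ as the image $\pi(Q)$ of a spectrahedron $Q$ of codimension $p+1$ inside $\mathcal{S}_n$ (establishing first, via a separate lemma, that this image is closed for generic data, so that $S^o=\pi(Q)$ holds exactly), and then identifies $S=Q^o\cap\lspan(A_1,\ldots,A_d)$ as a generic linear slice of $Q^o$. The algebraic boundary of $Q^o$ is described by the duals of the rank strata of $Q$, which are hypersurfaces of degree $\delta(p+1,n,r)$ by \cite{NieRanStu}, and Bertini preserves both irreducibility and degree under the generic slice by $\lspan(A_1,\ldots,A_d)$. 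The dual route lets the existing algebraic-degree computation carry all the weight; the primal route would effectively have to rederive it.

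There is, however, a genuine gap in the primal argument as you set it up. You assert that $\pi(\V_r)$ is a hypersurface of dimension $d-1$ and that $\pi|_{\V_r}$ is birational. Both fail whenever $\binom{n-r+1}{2}<p+1$ strictly, which is the generic situation for ranks in the Pataki range: then $\dim\V_r=d+p-\binom{n-r+1}{2}>d-1$, so $\pi|_{\V_r}$ has positive-dimensional generic fibers and $\pi(\V_r)$ is all of $\R^d$, not a hypersurface. A concrete instance is type $(4,2,1)$: the only admissible rank is $r=3$, $\V_3$ is the quartic symmetroid surface in $\R^3$, and $\pi(\V_3)=\R^2$; the degree-$12$ boundary curve is the \emph{branch locus} of $\pi|_{\V_3}$, not $\pi(\V_3)$ itself. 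The correct primal object is this branch locus, a proper subvariety of $\pi(\V_r)$, and its degree cannot be read off by slicing with a generic line and counting rank-$r$ matrices in a $(p+1)$-dimensional affine slice of $\Sym_n$, because most such rank-$r$ points do not project to the boundary. Identifying that degree with $\delta(p+1,n,r)$ is exactly what \cite{NieRanStu} proves via projective duality, which is why the paper moves to the dual picture rather than attempting this count head-on. Your first dimension count for $\binom{n-r+1}{2}\le p+1$ is a valid necessary condition, and your face argument for $\binom{r+1}{2}\le\binom{n+1}{2}-(p+1)$ is in the right spirit, but neither gives you the dimension, irreducibility, or degree of the boundary component without replacing ``image of $\V_r$'' by ``branch locus of $\pi|_{\V_r}$'' throughout and then controlling that locus, which is the hard step you have not supplied.
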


\begin{Rem}
Every of the above hypersurfaces given by a specific rank has a strictly positive probability
of occurring. This probability is related to the probability of having an extreme point of rank $r$ on a spectrahedron, computed by
Amelunxen and B\"urgisser \cite[Theorem 3.5]{AB}
for a certain natural distribution on the data $A_i,B_j,C$.
\end{Rem}

The quantity $\delta(m,n,r)$ is the {\em algebraic degree of semidefinite programming},
as defined and computed by Nie-Ranestad-Sturmfels \cite{NieRanStu} and von-Bothmer-Ranestad \cite{BotRan}. 
Setting $p= m-1$ in \eqref{eq:patakirange} gives the well-known 
{\em Pataki range} for the ranks of the optimal matrices in semidefinite programming,
where the feasible set is an $m$-dimensional spectrahedron of $n \times n$-matrices.
Indeed, this is precisely the special case $d =1$ of our Theorem \ref{Thm:algBound}.
Semidefinite programming amounts to projecting a spectrahedron onto the line,
and $\delta(m,n,r)$ is the number of critical points of rank $r$ of that projection. What surprises
about Theorem \ref{Thm:algBound} is that, given $n$, the algebraic structure of $\partial S$
is independent of $d= {\rm dim}(S)$ but only depends on the codimension $p$
of the projection.

\begin{Exm} \rm
Let $n=3$ and $p=1$. Then
(\ref{eq:patakirange}) says
$ \binom{4-r}{2} \leq 2$ and $\binom{r+1}{2} \leq 4 $,
so $r=2$. Theorem \ref{Thm:algBound} states that,
in each of the following situations, the algebraic boundary of 
a generic spectrahedral shadow $S$ in $\R^d$
is irreducible of degree $\delta(2,3,2) = 6$. Note that taking the Zariski closure comes with subtleties, as explained in Remark \ref{Rem:Zariskiclos}
\begin{itemize}
\item If $d=1$ then we project a planar cubic spectrahedron onto the line.
The line segment $S$ is bounded by algebraic numbers of degree $6$
over the ground field.
\item If $d=2$ then $S$ is the image of the familiar elliptope (\ref{eq:elliptope}) under a general projection.
Its boundary $\partial_{\rm alg}(S)$ is a rational curve of degree six in the plane.
\item If $d=3$ then $S$ is a convex body whose boundary is
 a surface of degree~$6$.
\item If $d=4$ then the threefold $\partial_{\rm alg}(S)$ 
is the branch locus of a general projection, from $5$-space to $4$-space,
of the secant variety to the quadratic Veronese surface. 
\end{itemize}
In each of these four situations, the points on the boundary of $S$ have rank $r=2$.
\end{Exm}

The rest of this paper is organized as follows. In Section 2 we present the proof
of Theorem \ref{Thm:algBound}, and we discuss the values that are attained 
by the algebraic degree of SDP.
Section 3 is concerned with case studies of generic spectrahedral
shadows in dimensions $d=2$ and $d=3$.
In Section 4 we examine how spectrahedral shadows 
degenerate when the data $A_i,B_j,C$ move from generic to special.
We believe that this analysis will be useful for studying
extended representations in convex optimization.

\section{One proof and many numbers}

As preparation for the proof of our main result, we establish the following fact.
\begin{Lem}\label{Lem:closed}
Let $Q\subset\R^n$ be a spectrahedron and let $\pi\colon \R^n\to\R^d$ be a generic 
projection. Then $\pi(Q)\subset\R^d$ is a closed convex semialgebraic set.
\end{Lem}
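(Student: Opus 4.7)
Convexity of $\pi(Q)$ is immediate from linearity of $\pi$ and convexity of $Q$. Semi-algebraicity follows from Tarski--Seidenberg: $\pi(Q)$ is the image under a linear map of the basic closed semi-algebraic set $Q$, defined by one positive-semidefiniteness condition.

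The nontrivial content is closedness. My plan is to apply Rockafellar's sufficient condition (\emph{Convex Analysis}, Thm.~9.1) for a linear image of a closed convex set to be closed: $\pi(Q)$ is closed whenever $\ker(\pi)\cap\text{rec}(Q)\subseteq\text{lin}(Q)$, where $\text{rec}(Q)$ is the recession cone and $\text{lin}(Q)$ the lineality space. For $Q=\{y:A_0+\sum y_jA_j\succeq 0\}$ both sets are themselves spectrahedral:
\[
\text{rec}(Q)\,=\,\{v\colon\textstyle\sum v_jA_j\succeq 0\},\qquad \text{lin}(Q)\,=\,\{v\colon\textstyle\sum v_jA_j=0\}.
\]
The relative boundary of $\text{rec}(Q)$ inside its linear span is the real algebraic locus cut out by vanishing of a suitable minor of the pencil $\sum v_jA_j$. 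A transversality count then shows that, provided $\dim\text{rec}(Q)\leq d$, a generic $(n-d)$-dimensional kernel $\ker(\pi)$ avoids $\text{rec}(Q)\setminus\text{lin}(Q)$, and Rockafellar's criterion gives closedness directly.

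The principal obstacle is the opposite regime $\dim\text{rec}(Q)>d$, in which a generic kernel unavoidably meets $\text{rec}(Q)\setminus\text{lin}(Q)$ and Rockafellar's hypothesis fails. Here I would argue closedness along a different route: the failure of the criterion for a generic $\pi$ forces the image $\pi(\text{rec}(Q))$ to span a full linear (not merely conic) subspace of $\R^d$, because the extra recession directions in $\ker(\pi)$ force the projected cone to be balanced. Then $\pi(Q)$ is translation-invariant under this subspace, and convexity combined with positive-dimensional invariance forces $\pi(Q)$ to coincide with a full affine subspace of $\R^d$, which is trivially closed. Justifying this ``descent'' of the projected recession cone to a full linear subspace uniformly in $Q$ and $\pi$---for example by Bertini-type arguments applied to the real determinantal hypersurface bounding $\text{rec}(Q)$---is, I expect, the main technical step.
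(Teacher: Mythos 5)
Your approach is genuinely different from the paper's, and it is not complete. The paper avoids convex analysis machinery entirely: it reduces to a codimension-one projection by induction, performs a generic linear change of coordinates so that the defining polynomial $f$ of $\partial_a Q$ becomes \emph{monic} in the eliminated variable $y$, and then uses continuity of polynomial roots to conclude that preimages of a convergent sequence in $\pi(Q)$ stay bounded in the $y$-direction; closedness of $Q$ then gives closedness of $\pi(Q)$. Your route via Rockafellar's criterion $\ker(\pi)\cap\mathrm{rec}(Q)\subseteq\mathrm{lin}(Q)$ is a natural alternative, and in the regime where that criterion holds generically your Bertini-type transversality argument is fine in outline.

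The problem is that, as you note yourself, there is a whole regime where the criterion fails for a Zariski-dense set of $\pi$. The stated dichotomy by $\dim\mathrm{rec}(Q)\leq d$ versus $>d$ is not the right one (for instance $\mathrm{rec}(Q)$ can be a full-dimensional pointed cone — e.g.\ the PSD cone itself — and a positive-measure, hence Zariski-dense, set of kernels will meet it nontrivially), but the deeper issue is your treatment of the second case. You claim that failure of the Rockafellar hypothesis for generic $\pi$ forces $\pi(\mathrm{rec}(Q))$ to be a linear subspace $W$ and then that $\pi(Q)$ is a full affine subspace. The first claim is not justified — $\ker(\pi)$ meeting $\mathrm{rec}(Q)\setminus\mathrm{lin}(Q)$ does not by itself make the pushed-forward cone balanced — and the second claim is too strong even if the first held: translation-invariance of a convex set under a subspace $W$ only says $\pi(Q)$ is a cylinder over a lower-dimensional convex set, not an affine subspace (a closed half-plane is translation-invariant under a line). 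So the closedness question does not disappear; it is merely pushed down to the cross-section, and you would need an induction you have not set up. You explicitly flag the ``descent'' argument as the main missing step, and I agree; as written the proof does not close. The paper's monic-polynomial argument sidesteps all of this and is the cleaner route.
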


Here, $\pi$ is represented by a $d \times n$-matrix $(\pi_{ij})$
and being ``generic'' is understood in the sense of algebraic geometry,
meaning that there exists a nonzero polynomial $h$ in the 
$dn$ matrix entries $\pi_{ij}$ such that
$\pi(Q)$ is closed whenever $\pi$ satisfies $h(\pi) \not= 0$.

\begin{proof}
That $\pi(Q)$ is convex is clear because $\pi$ is linear.
That $\pi(Q)$ is semialgebraic follows from Tarski's
Quantifier Elimination Theorem.
We need to show $\pi(Q)$ is closed.

We first discuss the case $\dim(\ker(\pi)) = 1$ and then use induction on $\dim(\ker(\pi))$.
After a change of coordinates in $\R^n$,
we assume that $\pi$ is the coordinate projection $\pi\colon \R^{n-1}\oplus\R \to \R^{n-1}$, $(x,y)\mapsto x$.
Let $f$ be the polynomial defining $\partial_a Q$.
After a generic change of coordinates, we may assume that 
$f$ is monic in the variable $y$, i.e.
\[
f(x,y) = y^s + f_1 y^{s-1} + f_2 y^{s-2} +  \cdots + f_{s-1} y +  f_s,
\]
where the $f_i$ are polynomials in $x = (x_1,\ldots,x_{n-1})$.
We will show that $\pi(Q)$ is closed.

Let $(x_k)_{k\in\N}$ be a sequence 
of points in  $\pi(Q)$ that converges in $\R^{n-1}$
and let $x = \lim_{k\to \infty} x_k $.
Each $x_k$ has a preimage $(x_k,y_k)\in Q$ satisfying $f(x_k,y_k) = 0$ or the line $\{(x_k,y)\colon y\in\R\}$ is contained in $Q$. 
In the former case, $y_k$ is a real root of $f(x_k,y)$,
and in the latter case we can arbitrarily set $y_k = 0$.
Then  $\{|y_k|\colon k\in\N\}$ is a bounded subset of $\R$
  because the zeros of a polynomial depend continuously on the coefficients.
We choose a convergent subsequence of $(y_k)_{k\in\N}$ with limit $y$ in $\R$.
Since $Q$ is closed, the limit point $(x,y)$ lies in $ Q$.
Now $\pi(x,y) = x$, and we conclude that $\pi(Q)$ is closed.

There is a slight twist to using the codimension $1$ case above as the induction hypothesis because
$\pi(Q)$ is generally not a spectrahedron. However, it is closed, convex and semialgebraic,
so $\partial_a \pi(Q)$ is also defined by a polynomial $f$, see \cite[Lemma 2.5]{Sin}.

Now let $\pi\colon \R^n\to\R^d$ be a generic projection with $n -d > 1$.
We write $\pi$ as the composition of $n-d$ projections of codimension $1$, say $\pi = \pi_{1}\circ\pi_{2}\circ\cdots\circ \pi_{n-d}$, with $\dim(\ker(\pi_i)) = 1$.
By induction on $n-d$, we may assume that 
$(\pi_{2}\circ\cdots\circ \pi_{n-d})(Q)$ is closed. We now apply the above
argument to this closed convex semialgebraic set with the codimension $1$ projection $\pi_1$.
We conclude that $\pi(Q)$ is closed, as desired.
\end{proof}

We now present the proof of our main result.

\begin{proof}[Proof of Theorem \ref{Thm:algBound}]
Let $S$ be a generic spectrahedral shadow of type $(n,d,p)$
defined by symmetric matrices $A_1,\ldots,A_d$, $B_1,\ldots,B_p$,
and $C$ as above. Let 
\[
S^o = \left\{ (a_1,\ldots,a_d)\in\R^d \; \vert \; \forall \;x\in S\colon \langle x,a\rangle \geq -1 \right\}
\]
denote the convex body dual to the 
shadow $S$ with respect to the
usual inner product on $\R^d$. Up to taking closure, the dual convex body $S^o$ is the
image of the spectrahedron
\begin{equation}
\label{eq:QQQ}
Q \,=\, \bigl\{ \,U\in \mathcal{S}_n^+ \,\colon  \langle B_1,U\rangle=\cdots=\langle B_p,U\rangle =0 \,\,
{\rm and}\,\,
 \langle C,U\rangle =1 \bigr\}
\end{equation}
under the linear map
\[ \pi \,\colon \,\mathcal{S}_n \to \R^d , \,\,U\mapsto (\langle A_1,U\rangle,\ldots,\langle A_d,U\rangle). \]
Here $\mathcal{S}_n$ denotes the vector space of real symmetric matrices and $\mathcal{S}_n^+$ the cone of positive semidefinite $n \times n$-matrices.
The inner product of two symmetric matrices is the usual trace inner product.
 This representation was used in the book chapter by
 Rostalski-Sturmfels \cite[Remark 5.43]{RosStu}
 to prove that the class of spectrahedral shadows is closed under duality.
 In our situation, $\pi(Q)$ is closed by 
  Lemma \ref{Lem:closed}. Indeed, $\pi$ is generic by
 our genericity assumptions on $A_1,\ldots,A_d$.
Hence $S^o = \pi(Q)$ holds exactly.
 
We now apply duality to the identity $S^o = \pi(Q)$.
This expresses our spectrahedral shadow $S = (S^o)^o = \pi(Q)^o$ as 
a linear section of the convex body $Q^o$ that is dual to~$Q$:
\[
S \,\,\cong\,\, Q^o \,\cap\, \lspan(A_1,\ldots, A_d)
\]
via the isomorphism $\R^d \to \lspan(A_1,\ldots,A_d)$, $(x_1,\ldots,x_d)\mapsto \sum x_i A_i$.
The duality between $Q$ and $Q^o$ takes place in $\mathcal{S}_n$ with its usual trace inner product.

We understand the algebraic boundary of $Q^\circ$ by the results in~\cite{NieRanStu}.
Namely, $Q$ is a generic spectrahedron of codimension $p+1$ in the space of symmetric $n\times n$ matrices.
By \cite[Theorem 5.50]{RosStu}, its extreme points are
matrices of rank $n-r$ for some $r$ that lies in the Pataki range \eqref{eq:patakirange}.
We see each one of these ranks as the rank of extreme points with a positive probability. This was computed by Amelunxen and B\"urgisser \cite[Theorem 3.5]{AB} for a certain natural distribution on the data $B_j$ and $C$.

The projective dual to the irreducible variety of rank $n-r$ matrices in $Q$ is
 a hypersurface of degree $\delta(p+1,n,r)$, by \cite[Theorem 13]{NieRanStu}.
A general point in this hypersurface is a matrix of rank $r$.
The set of these hypersurfaces, as $r$ ranges over \eqref{eq:patakirange}, contains
the irreducible components $X$ of the algebraic boundary of $Q^o$.

Because $\partial S\subset \partial Q^\circ$, we get the inclusion
\begin{equation}
\label{eq:cutgeneric}
\partial_{\rm alg} (S) \,\,\,\subset\,\, \bigcup_{X\subset \partial_{\rm alg} (Q^o)} \!\! X\,\cap\, \lspan(A_1,\ldots,A_d),
\end{equation}
where $X$ runs over all irreducible components of the algebraic boundary of $Q^o$.
Since the symmetric matrices $A_1,\ldots, A_d$ are generic, the varieties $X\cap\lspan(A_1,\ldots,A_d)$ are irreducible hypersurfaces of degree $\delta(p+1,n,r)$ by Bertini's Theorem.
The irreducible components of $\partial_{\rm alg}(S)$ are among these and each one has a positive probability of occurring because the semi-algebraic set $X\cap \partial Q^\circ$ is open in $\partial Q^\circ$.
\end{proof}

\begin{Rem}\label{Rem:Zariskiclos}
Here is an important technical point regarding Theorem \ref{Thm:algBound}.
Let $K$ be the subfield of $\R$ generated by the entries of $A_i,B_j,C$.
Typically, we will have $K = \mathbb{Q}$.

The irreducibility statement in Theorem \ref{Thm:algBound} is understood
with respect to the ground field $K$. The rank $r$ stratum in $\partial_{\rm alg}(S)$
is irreducible as a variety over $K$, but it may break into irreducible
components over $\R$. Consider the case $d=1$: here $S$ is a line segment,
so its algebraic boundary over $\R$ consists of two points, but its algebraic
boundary over $K$ is an irreducible $0$-dimensional scheme of length
$\delta(p+1,n,r)$. That algebraic statement is the whole point of \cite{NieRanStu}.
For spectrahedral shadows $S$ of higher dimension $d \geq 2$, 
the $K$-components of $\partial_{\rm alg}(S)$ usually
coincide with the $\R$-components. But there are some notable exceptions,
as we shall see in Section 3.
\end{Rem}

We next present a table of values for the degrees $\delta(p+1,n,r)$
of the boundary hypersurfaces in Theorem~\ref{Thm:algBound}.
This table extends the one in \cite{NieRanStu}
and it can be computed using the formula in \cite{BotRan}.
The rows are indexed by the codimension $p$ of the projection,
and the columns are indexed by the matrix size $n$.
Each box contains all ranks $r$ in the Pataki range (\ref{eq:patakirange}).
For example, consider the three entries
for $p = 5$ and $n=6$. If $d=2$ then $S$ is a planar spectrahedral
shadow. It is the projection of a 
$7$-dimensional spectrahedron of $6 {\times} 6$-matrices.
The points in the boundary of $S$ have rank $5$, $4$, or $3$.
The corresponding irreducible plane curves have degrees 
$32$, $1400$ and $112$ respectively.

The first row $p=1$ concerns codimension $1$ projections.
We start with a $(d+1)$-dimensional spectrahedron
of $n \times n$-matrices and we project it into $\R^d$.
The ramification locus of the projection is defined by
the determinant (of degree $n$) and one of its directional derivatives (of degree $n-1$). By B\'ezout's
Theorem, the ramification locus has degree $n(n-1)$,
and hence so does the branch locus. This explains the formula 
$\delta(2,n,1) = n(n-1)$ for the entries in the first row of the table.

{\small
\begin{table}
\begin{center}
\begin{tabular}{|c|cc|cc|cc|cc|cc|cc|cc|cc|} \hline
& \multicolumn{2}{c|}{$n=3$} & \multicolumn{2}{c|}{$n=4$} & \multicolumn{2}{c|}{$n=5$} & \multicolumn{2}{c|}{$n=6$} & \multicolumn{2}{c|}{$n=7$} &
\multicolumn{2}{c|}{$n=8$} & \multicolumn{2}{c|}{$n=9$} & \multicolumn{2}{c|}{$n=10$}\\
\hline
$p$ & $r$ & ${\rm deg}$ & $r$ & ${\rm deg}$ & $r$ & ${\rm deg}$
& $r$ & ${\rm deg}$ & $r$ & ${\rm deg}$ & $r$ & ${\rm deg}$ & $r$ & ${\rm deg}$ & $r$ & ${\rm deg}$ \\
\hline
1 & 2 & 6 & 3 & 12 & 4 & 20 & 5 & 30  & 6 & 42 & 7 & 56 & 8 & 72 & 9 & 90 \\
\hline
2 &  2 & 4 & 3 & 16 & 4 & 40 & 5 & 80 & 6 & 140 & 7 & 224 & 8 & 336 & 9 & 480 \\
  &  1 & 4 & 2 & 10 & 3 & 20 & 4 & 35 & 5 &  56 & 6 &  84 & 7 & 120 & 8 & 165 \\
\hline
3 & 1 & 6 & 3 & 8  & 4 & 40 & 5 & 120 & 6 & 280 & 7 & 560 & 8 & 1008 & 9 & 1680 \\
  &   &   & 2 & 30 & 3 & 90 & 4 & 210 & 5 & 420 & 6 & 756 & 7 & 1260 & 8 & 1980 \\
\hline
4 & 1 & 3 & 2 & 42 & 4 & 16  & 5 & 96  & 6 & 336  & 7 & 896  & 8 & 2016 & 9 & 4032 \\
  & & &   &    & 3 & 207 & 4 & 672 & 5 & 1722 & 6 & 3780 & 7 & 7434 & 8 & 13464 \\
\hline
5 & & & 2 & 30 & 3 & 290 & 5 & 32   & 6 & 224 & 7 & 896 & 8 & 2688 & 9 & 6720\\
  & & & 1 & 8  & 2 & 35  & 4 & 1400 & 5 & 4760& 6 & 13020& 7 & 30660& 8 & 64680 \\
  & & &   &    &   &     & 3 & 112  & 4 & 294 & 5 & 672 & 6 & 1386 & 7 & 2640 \\
\hline
6 & & & 2 & 10 & 3 & 260 & 4 & 2040 & 6 & 64   & 7 & 512  & 8 & 2304 & 9 & 7680 \\
  & & & 1 & 16 & 2 & 140 & 3 & 672  & 5 & 9600 & 6 & 33540& 7 & 96120& 8 & 238920\\
  & & &   &    &   &     &   &      & 4 & 2352 & 5 & 6720 & 6 & 16632 & 7 & 36960 \\
\hline
7 & & & 1 & 12 & 3 & 140 & 4 & 2100 & 5 & 14532& 7 &    128 & 8 & 1152 & 9 & 5760 \\
  & & &   &    & 2 & 260 & 3 & 1992 & 4 & 9576 & 6 &  66948 & 7 & 238140 & 8 & 706860 \\
  & & &   &    &   &     &   &      &   &      & 5 & 34800 & 6 & 104544 & 7 & 273240 \\
\hline
8 & & & 1 & 4 & 3 & 35  & 4 & 1470 & 5 & 16485& 6 & 104692 & 8 & 256 & 9 & 2560 \\
  & & & & & 2 & 290 & 3 & 3812 & 4 & 25998 & 5 & 122400 & 7 & 474145 & 8 &1708630 \\
  & & & & &   &     &   &      &   &        &   &         & 6 & 451638 & 7 & 1399860\\
\hline
9 & &  & & & 2 & 207 & 4 & 630  & 5 & 13650 & 6 & 127596& 7 & 761364& 9 & 512 \\
  & &  & & & 1 &  16 & 3 & 5184 & 4 & 52143& 5 & 324624& 6 & 1490049& 8 & 3401574\\
  & &  & & &   &     & 2 & 126  & 3 & 672& 4 & 2772 & 5 & 9504& 7 & 5524728\\
  & &  & & &   &     &   & &   & &   & &   & & 6 & 28314\\
\hline
\end{tabular}
\end{center}
\smallskip
\caption{Degrees of the
boundary components of generic spectrahedral shadows}
\label{degtab}
\end{table}
}

The case $p=0$ is omitted from Table \ref{degtab} because
a spectrahedral shadow of type $(n,d,0)$ is just
a $d$-dimensional spectrahedron of $n \times n$-matrices.
The algebraic boundary of such a spectrahedron is
a hypersurface of degree $n$, given by the determinant.

\begin{Rem}
In the next sections we compute the equations defining $\partial_{\rm alg}(S)$
for various examples.  This was done using the software {\tt Macaulay2} \cite{M2}. 
In each case, we start with a determinantal ideal  of
a symmetric matrix of linear forms. This represents a particular rank stratum in the
upstairs spectrahedron. We then compute the ramification locus of the projection $\pi$,
and we finally take the image of this ramification to obtain the branch locus downstairs.
Such elimination problems are generally hard.
\end{Rem}

\smallskip

We close this section by recording the following special case from the proof above.

\begin{Cor} 
\label{cor:NoSlice}
If $p = \binom{n+1}{2}-d-1$ then the generic spectrahedral shadow $S$ is the convex body dual to 
a generic $d$-dimensional spectrahedron $Q$ of $n \times n$-matrices.
\end{Cor}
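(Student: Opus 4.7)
The plan is to exploit the identity $S^o = \pi(Q)$ that was established within the proof of Theorem \ref{Thm:algBound}, where $Q \subset \mathcal{S}_n$ is the spectrahedron defined in \eqref{eq:QQQ} and $\pi \colon \mathcal{S}_n \to \R^d$ is the projection $U \mapsto (\langle A_1, U\rangle, \ldots, \langle A_d, U\rangle)$. Under the hypothesis $p = \binom{n+1}{2} - d - 1$, the linear slice cutting out $Q$ has codimension $p+1 = \binom{n+1}{2} - d$ in $\mathcal{S}_n$, so $Q$ is contained in a $d$-dimensional affine subspace $L \subset \mathcal{S}_n$, inside which it is full-dimensional thanks to transversality of the symmetroid.

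The key step is then to observe that the restriction $\pi|_L \colon L \to \R^d$ is an affine map between affine spaces of the same dimension~$d$, and that for generic $A_1,\ldots,A_d$ it is an affine isomorphism. Letting $\psi \colon \R^d \to L$ denote its inverse, $\psi(x)$ is an affine-linear pencil of symmetric matrices in $x$, and one reads off
\[
S^o \,=\, \pi(Q) \,=\, \{\, x \in \R^d \,:\, \psi(x) \succeq 0 \,\}.
\]
Thus $S^o$ is itself a $d$-dimensional spectrahedron of $n \times n$-matrices; genericity of the pencil $\psi(x)$ is inherited from $Q$, because transversality of the symmetroid and full-dimensionality are preserved by the affine isomorphism $\pi|_L$.

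The corollary then follows by taking polar duals: $S = (S^o)^o$ is the convex body dual to the generic $d$-dimensional spectrahedron $S^o$. The only point requiring care, and the mild obstacle in this argument, is to verify that the generic choice of $A_1,\ldots,A_d$ really does force $\pi|_L$ to be an affine isomorphism; since failure is cut out by the single polynomial condition that a certain $d \times d$ determinant in the entries of the $A_i$ vanish, this holds automatically under the standing genericity hypotheses of Theorem~\ref{Thm:algBound}.
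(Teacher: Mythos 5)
Your proposal is correct and matches the paper's intent: the paper records the corollary as a direct special case of the proof of Theorem \ref{Thm:algBound}, where the identity $S^o = \pi(Q)$ is established. You have simply made explicit the observations that the affine hull $L$ of $Q$ has dimension $\binom{n+1}{2}-(p+1) = d$, that $\pi|_L$ is an affine isomorphism onto $\R^d$ for generic $A_1,\ldots,A_d$, and hence that $S^o = \pi(Q) \cong Q$ is a generic $d$-dimensional spectrahedron of $n\times n$-matrices whose dual is $S = (S^o)^o$.
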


\section{A gallery of shadows}

In this section we discuss examples of
spectrahedral shadows. Most of these have dimension $d=2$.
We start with projections from $3$-space into the plane, so that $p=1$.

\begin{Exm}
\label{Exm:321} 
A spectrahedral shadow $S$ of type $(3,2,1)$ is 
the projection into $\R^2$
of a cubic spectrahedron in $\R^3$. An example of
such a spectrahedron is the {\em elliptope}
\begin{equation}
\label{eq:elliptope}  \mathcal{E}_3  \quad = \quad
\biggl\{ \,(x,y,z) \in \R^3\,:\,
\left(
\begin{array}[]{ccc}
1 & x & y \\
x & 1 & z \\
y & z & 1
\end{array}\right) \succeq 0 \, \biggr\}.
\end{equation}
If the projection is generic then $S$
is a planar convex body whose algebraic boundary
$\partial_{\rm alg}(S)$ is a rational curve of degree $6$. 
Indeed, the ramification of the projection is the curve
in $\R^3$ defined by a cubic and one of its directional
derivatives, which is a quadric. The curve has degree $6$. Its expected genus would be 
$4$, cf.~\cite[Remark 6.4.1(b)]{Har}. But it has $4$ singular points coming from the $4$ nodes of the cubic
surface, cf.~\cite{Quartic}. So, the curve is rational.
 Its projection into the plane is a sextic curve with $6$ singularities coming from the projection and $4$ singularities coming from the nodes of the cubic. 
\end{Exm}

Moving up by one degree, we now come to 
quartic spectrahedra in $\R^3$. These were recently studied by
Ottem {\it et al.}~in \cite{Quartic}. We consider their projections into the plane.

\begin{Exm}\label{Exm:421}
We now consider type $(4,2,1)$. According to \cite{Quartic}, there are $20$
different semi-algebraically generic types of quartic spectrahedra in $\R^3$.
They are classified according to their number of nodal singularities on
and off the boundary surface. The total number of complex nodes is $10$.
We consider a generic projection into the plane. Now, the ramification locus
is a complete intersection of the quartic and a cubic. This is a 
 curve of degree $12$ with $10$ singular points, namely the nodes of the quartic surface.
The genus of such a curve is $9$;
 cf.~\cite[Remark 6.4.1(b)]{Har}. The projection into the plane is a curve of degree $12$
 of genus $9$, so it has $55 - 9 = 46 $ singular points.
Figure \ref{fig:421} shows an example where all $10$ nodes are on the spectrahedron.
 Their images in the plane are the $10$ blue points. In addition, this curve of degree $12$
 has $36$ singularities that are double points of the projection. These are marked in red in
Figure \ref{fig:421}.

\begin{figure}[h]
\includegraphics{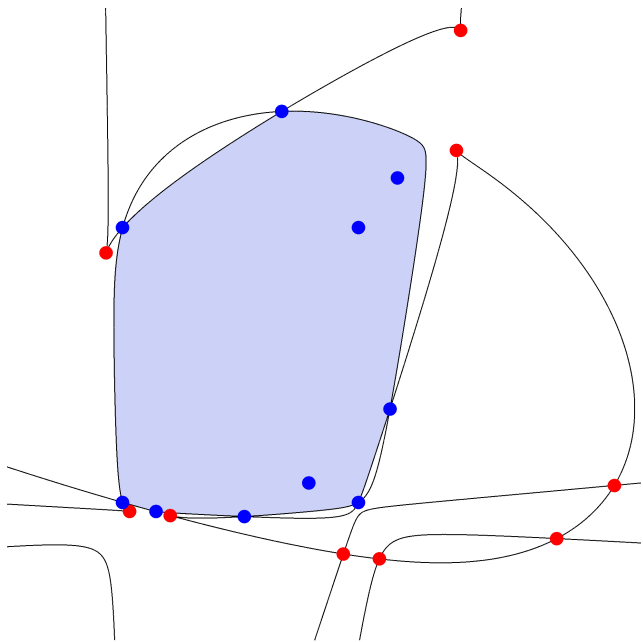}
\caption{A generic planar spectrahedral shadow of type $(4,2,1)$ is bounded by a curve of degree $12$
that has $46 = 10+36$ singular points.}
\label{fig:421}
\end{figure}
\end{Exm}

We now come to the second row ($p=2$) of Table \ref{degtab},
where the projection is from $4$-space into the plane.
Already the first entry $n=3$ is quite interesting and beautiful.

\begin{figure}[h]
\centering
\includegraphics[scale=1]{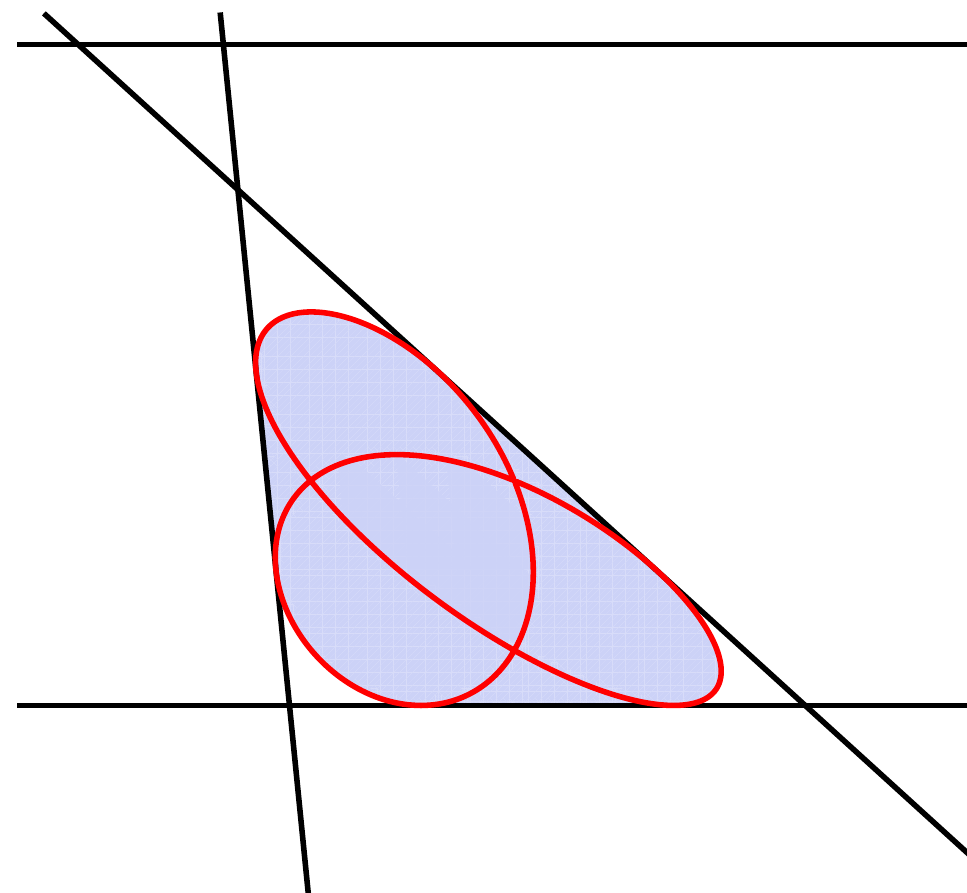}
\vspace{-0.1in}
\caption{The shaded area is a spectrahedral shadow of type $(3,2,2)$.}
\label{fig:322}
\end{figure}

\begin{Exm}\label{Exm:322}
Figure \ref{fig:322} shows a planar spectrahedral shadow of type $(3,2,2)$.
This is the projection of the following $4$-dimensional spectrahedron into the
$(x_1,x_2)$-plane:
\[
\mathcal{A} \,\,:=\,\, \left(
\begin{array}[]{ccc}
y_1 & x_1 & x_2 \\
x_1 & y_2 & -x_1-\frac{6}{5}x_2-\frac{7}{10} \\
x_2 & -x_1-\frac{6}{5}x_2-\frac{7}{10} & 2-y_1-y_2
\end{array}
\right)  \,\, \succeq \,\, 0.
\]
The points of rank $1$ on the spectrahedron define an irreducible quartic curve in $\R^4$,
and the red curve in Figure \ref{fig:322} is the projection of that curve
into $\R^2$. It is a quartic of genus $0$, because it has $3$ double points. We obtain its equation by
eliminating $y_1$ and $y_2$ from the $2 \times 2$-minors of $\mathcal{A}$:
\begin{equation}
\label{eq:quartic1}
 \begin{matrix}
100 x_1^{4}+240 x_1^{3} x_2+344 x_1^{2} x_2^{2}+240 x_1 x_2^{3}+144 x_2^{4}
+140 x_1^{3}+368 x_1^{2} x_2 \\
+ 380 x_1 x_2^{2}+168 x_2^{3}+49 x_1^{2}+140 x_1 x_2+49 x_2^{2}.
\end{matrix}
\end{equation}
The four black lines in Figure~\ref{fig:322} represent the rank $2$ locus on the
spectrahedral shadow. Their union is the reducible quartic curve whose defining polynomial equals
\begin{equation}
\label{eq:quartic2}
(2 x_2-3) (22 x_2+17) (20 x_1+2 x_2+17) (20 x_1+22 x_2-3).
\end{equation}
This quartic is obtained from $f = {\rm det}(\mathcal{A})$ by 
eliminating $y_1$ and $y_2$ from the saturation of the ideal
$\bigl\langle f, \,\partial f/\partial y_1,\,\partial f/\partial y_2 \bigr\rangle$
with respect to the $2 \times 2$-minors of $\mathcal{A}$.
The entries $4$ and $4$ in the box for $p=2,n=3$ in Table \ref{degtab}
correspond to the quartics in (\ref{eq:quartic1}) and~(\ref{eq:quartic2}).

It is instructive to examine the proof of Theorem \ref{Thm:algBound}
for this example. The data are
{\small
\[
A_1 = 
\begin{bmatrix}
\,0 & \!\!\! 1 & 0 \\
\, 1 & \! 0 & \!\!\! -1 \\
\,0 & \!\! -1 & 0
\end{bmatrix}\!\!,\,
A_2 = \begin{bmatrix}
0 & \! 0 & \!\! 1 \\
0 & \! 0 & \!\!\!\! -\frac{6}{5} \\
\! 1 & \!\!\! -\frac{6}{5} & \! 0 
\end{bmatrix}\!\!,\,
B_1 = 
\begin{bmatrix}
1 & \! 0 \! & 0 \\
0 & \! 0 \! & 0 \\
0 & \! 0 \! & \!\! \! -1
\end{bmatrix}\!\!,\,
B_2 =  \begin{bmatrix}
0 & \! 0 \! & 0 \\
0 & \! 1 \! & 0 \\
0 & \! 0 \! & \!\!\! -1
\end{bmatrix}\!\!,\,
C = \begin{bmatrix}
0 & \!0\! & 0 \\
0 & \! 0 \! & \!\! \! -\frac{7}{10} \\
0 & \!\! -\frac{7}{10}\! & 2
\end{bmatrix} \! .
\]
}
The following spectrahedron, seen in (\ref{eq:QQQ}), is isomorphic to the elliptope 
$\mathcal{E}_3$ in (\ref{eq:elliptope}):
\[
Q \,\,=\,\, \bigl\{U\in \mathcal{S}_3^+\colon \langle B_1,U\rangle =0, \langle B_2,U\rangle =0, 
\langle C,U\rangle =1 \bigr\}.
\]
The convex body $Q^o$ dual to $Q$
is bounded by the quartic Steiner surface and four planes,
as seen in \cite[Figure 4]{StUh}. Our spectrahedral shadow $S$ is the
two-dimensional section $ \,Q^o \,\cap\, \lspan(A_1,A_2)$.
The rational quartic in Figure \ref{fig:322} is the section of the Steiner surface,
and the lines arise from the four planes. One of the three lines is
not part of the algebraic boundary for our particular $S$.
When the data $A_1,A_2,B_1,B_2,C$ are more general, then the four lines
form a quartic curve that is irreducible over $\Q$. So, over $\Q$, the algebraic boundary
$\partial_{\rm alg}(S)$ has degree $4+4$, as predicted by Theorem~\ref{Thm:algBound}.
\end{Exm}

In Examples \ref{Exm:321} and \ref{Exm:322} we discussed
planar spectrahedral shadows that are defined by $3 \times 3$-matrices.
The last relevant case in this family occurs for $p=3$.

\begin{Exm}
This is an illustration of Corollary \ref{cor:NoSlice} for $n=3$ and $d=2$.
A spectrahedral shadow $S$ of type $(3,2,3)$ is the projection of 
a $5$-dimensional spectrahedron into $\R^2$. Alternatively,
$S = Q^o$ is the dual to a cubic spectrahedron $Q$ in $\R^2$.
Since $\partial_{\rm alg}(Q)$ is a smooth cubic curve, its dual
$\partial_{\rm alg}(S)$ is a curve of genus $1$ and degree $6$.
It has $9$ cusps and no ordinary double points, by Pl\"ucker's formulas for plane curves.
\end{Exm}

Here is an analogous example in dimension $d=3$.

\begin{Exm}
Let $Q$ be a generic quartic spectrahedron in $\R^3$, as in \cite{Quartic}.
Its convex dual $S = Q^o$ is a spectrahedral shadow of type $(4,3,6)$,
so we obtain it as a projection from $\R^9$ to $\R^3$.
The algebraic boundary $\partial_{\rm alg}(S)$ has
two components, one for rank $1$ and one for rank $2$. The first is
the surface of degree $16$ that is dual to the quartic $\partial_{\rm alg}(Q)$.
The second is the arrangement of $10$ planes dual to the $10$ singular points in
$\partial_{\rm alg}(Q)$.
The latter component factors over the reals whereas it is generically irreducible over~$\Q$. 
\end{Exm}

Our final example concerns the lower right entry in 
Table \ref{degtab}.

\begin{Exm}
The smallest case when the Pataki range consists of four possible ranks
occurs for $p=9$ and $n=10$. Let us focus on plane curves, so we 
consider generic spectrahedral shadows $S$ of type $(10,2,9)$.
Here, the algebraic boundary $\partial_{\rm alg}(S)$ may have up to four
components, representing the ranks $6,7,8$ and $9$.
These components are irreducible plane curves of degrees
$28314$, $5524728$, $3401574$ and $512$ respectively.
\end{Exm}

\section{Perturbations and degenerations}

Theorem \ref{Thm:algBound} characterizes the algebraic
boundaries of spectrahedral shadows whose defining
matrices $A_i, B_j, C$ are generic. This genericity assumption 
stands in contrast to what researchers in 
optimization actually care about. Indeed, when modeling convex sets with
 semidefinite representations, the matrices $A_i,B_j,C$ must be chosen
to have a very special structure.
So, the question is how to make our algebraic geometry results
relevant for applications of semidefinite programming, such as those discussed in \cite{BPT}.
That question will require a further research effort that goes beyond the present paper.

Geometrically, we need to study the effects of perturbations, from special data to generic data,
and of degenerations, from generic data to special data. 
Our final section is meant to illustrate some of the issues that need to be addressed.
We focus on three simple examples, where the spectrahedral shadow $S$
is a convex polygon in the plane.
\begin{Exm}
Consider the special quartic spectrahedron in $\R^3$ that is defined by
\[
\mathcal{A} \,\,=\,\, \left(
\begin{array}[]{cccc}
1   & x_1 & x_2 & y \\
x_1 & 1   & y   & x_2 \\
x_2 & y   & 1   & x_1 \\
y   & x_2 & x_1 & 1
\end{array}\right) \,\, \succeq \,\, 0.
\]
This is the tetrahedron with vertices $(1,1,1), (1,-1,-1), (-1,1,-1)$ and $(-1,-1,1)$.
Its projection into the $(x_1,x_2)$-plane is the square $S$ with vertices $(\pm 1,\pm 1)$.
This realizes $S$ as a spectrahedral shadow of type $(4,2,1)$. 
The algebraic boundary $\partial_{\rm alg}(S)$ consists of the four boundary lines
of the square $S$. There are two further lines, connecting antipodal vertices of
the square, where the map also branches. Indeed, if we
eliminate $y$ from $\bigl\langle {\rm det}(\mathcal{A}),\,\partial {\rm det}(\mathcal{A})/\partial y
\bigr\rangle$, then we get this polynomial of degree $12$:
\begin{equation}
\label{eq:sixfactors} (x_1-1)^2 (x_1+1)^2 (x_2-1)^2 (x_2+1)^2 (x_1-x_2)^2 (x_1+x_2)^2 .
\end{equation}
Thus, algebraically, this computation is consistent with Example \ref{Exm:421},
which tells us that, for generic data, one expects the branch
curve to have degree $12$ and genus $9$.

The ramification locus of the rank $2$-variety projects onto the $4$ vertices of $S$.

\begin{figure}[h]
\begin{center}
\begin{minipage}[]{0.4\textwidth}
\begin{center}
\includegraphics[scale = 0.7]{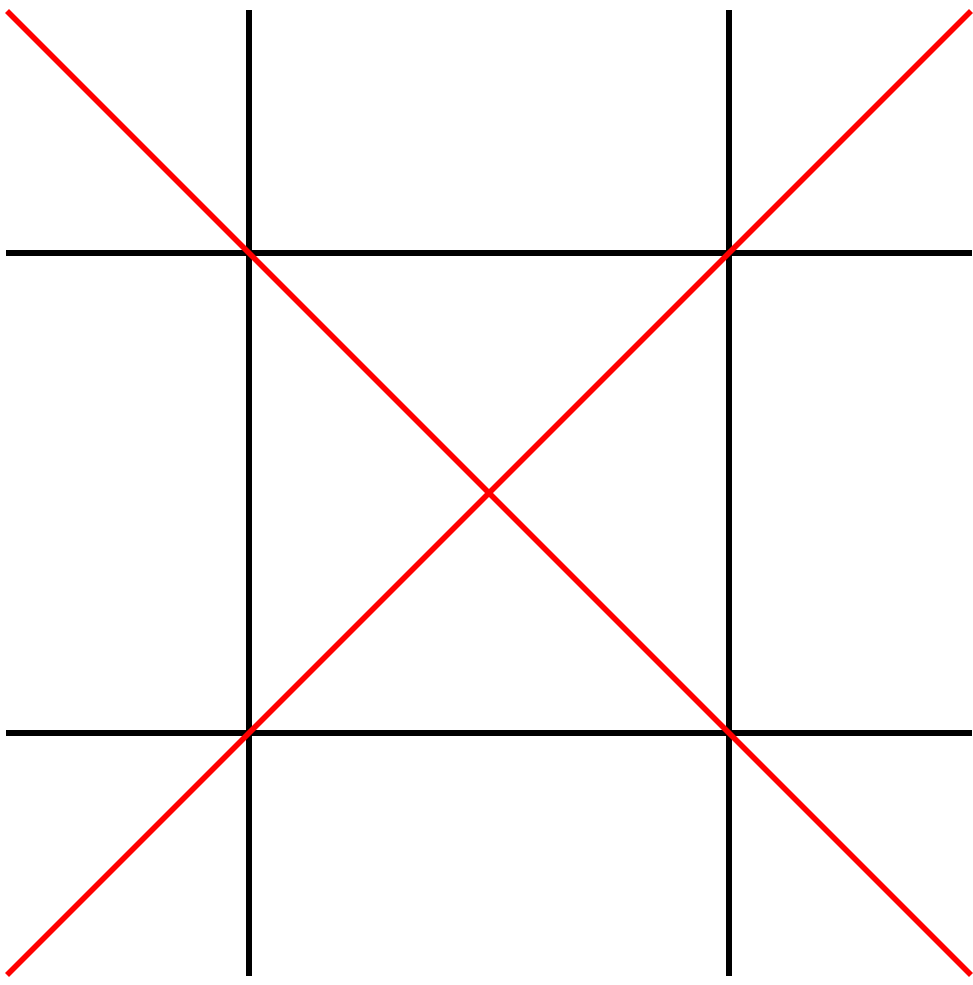}
\end{center}
\end{minipage} \qquad
\begin{minipage}[]{0.4\textwidth}
\begin{center}
\includegraphics[scale = 0.7]{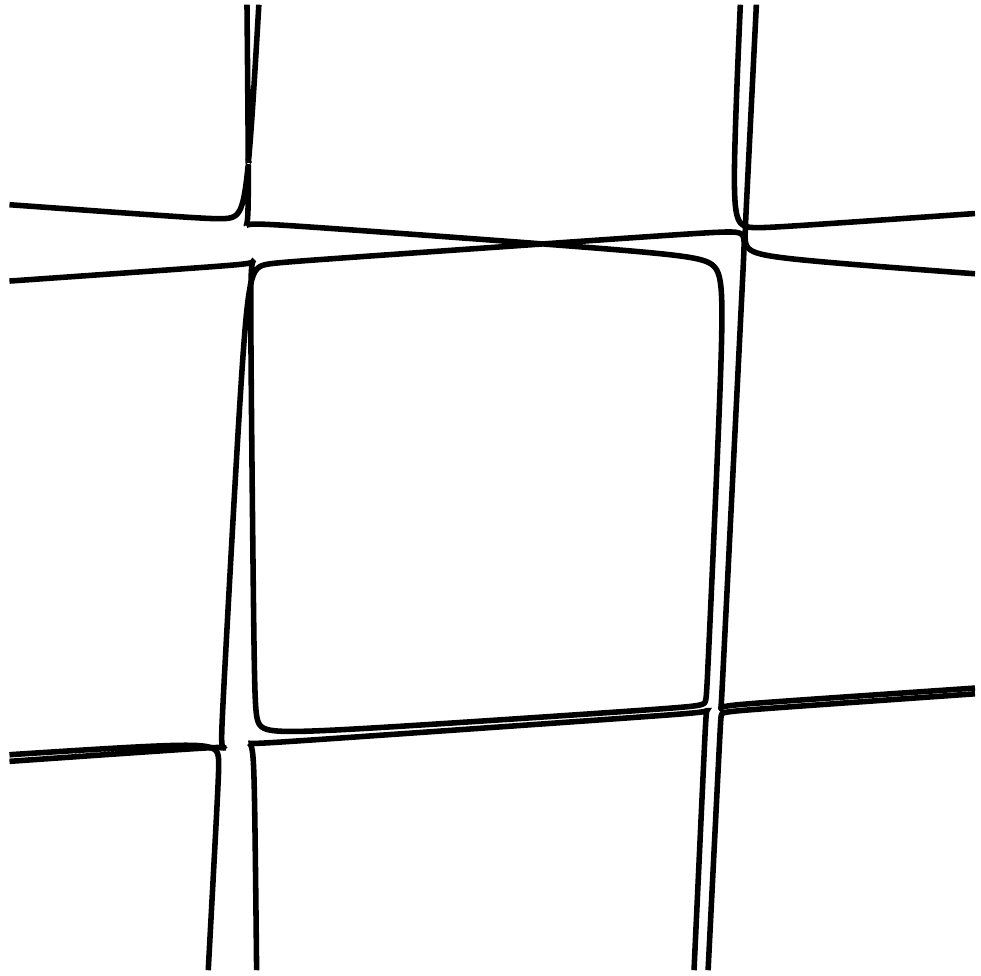}
\end{center}
\end{minipage}
\end{center}
\caption{The square on the left and a generic perturbation on the right.}
\label{fig:square}
\end{figure}

We now perturb the representation of $S$ by replacing 
the given matrix $\mathcal{A}$ with
\[
\mathcal{A}_\epsilon \quad = \quad
\left(
\begin{array}[]{cccc}
1   & x_1 & x_2 & y \\
x_1 & 1   & y   & x_2 \\
x_2 & y   & 1   & x_1 \\
y   & x_2 & x_1 & 1
\end{array}\right) \,+\,\,\epsilon
\left(
\begin{array}[]{cccc}
2y & 0  & 0  & 0 \\
0  & 3x_2 & 0  & 0 \\
0  & 0  & 5y & 0 \\
0  & 0  & 0  & -7x_1
\end{array}\right).
\]
The resulting spectrahedral shadow $S_\epsilon$ of type $(4,2,1)$ is generic
for almost all real choices of $\epsilon$.
Its algebraic boundary $\partial_{\rm alg}(S_\epsilon)$ is defined by
an irreducible polynomial of degree $12$ that specializes to
the perfect square in (\ref{eq:sixfactors}) when $\epsilon \rightarrow 0$.
For small $\epsilon > 0$,
each of the two red double lines becomes a complex conjugate pair,
so it disappears from the picture in $\R^2$.
The curve $\partial_{\rm alg}(S_\epsilon)$ is shown for
$\epsilon=\frac{1}{50}$ on the right in 
Figure~\ref{fig:square}.
\end{Exm}

We next discuss representations of
hexagons as spectrahedral shadows with $n=4$.

\begin{Exm}
Let $S$ be the hexagon in $\R^2$ with vertices $(-1,1)$, $(-1,0)$, $(0,-1)$, $(1,-1)$, $(1,0)$, and $(0,1)$. It is the spectrahedral shadow of type $(4,2,3)$ given by
\[
\left(
\begin{array}[]{cccc}
1 & x_1 & x_2 & x_1+x_2 \\
x_1 & 1 & y_1 & y_2   \\
x_2 & y_1 & 1 & y_3   \\
x_1+x_2 & y_2 & y_3 & 1 
\end{array}\right) \,\,\succeq \,\,0.
\]
This representation is due to Jo\~ ao Gouveia.
Each point on an edge of the hexagon $S$ has rank $3$. The inverse image of each edge
is a $2$-dimensional face of the $5$-dimensional spectrahedron upstairs. That face is 
a planar spectrahedron of degree $3$.
For instance, over the edge between $(-1,1)$ and $(0,1)$, that cubic spectrahedron is defined by
\[
\hbox{$x_2=1$, $\,x_1=y_1$, $\,y_3=x_1+1$, \ \ and} \qquad
\left(
\begin{array}[]{ccc}
1 & y_1 & y_1+1 \\
y_1 & 1 & y_2 \\
y_1+1 & y_2 & 1
\end{array}\right) \,\,\succeq \,\, 0.
\]
The boundary of this face consists of matrices of rank $2$,
and it maps onto the edge of $S$. Thus, in this degenerate example, the
 branch loci for rank $2$ and rank $3$ coincide.

Compare this with what happens generically for type $(4,2,3)$.
According to Table~\ref{degtab}, the rank $3$ boundary is a curve
of degree $8$, and the rank $2$ boundary is a curve of degree $30$.
This generic behavior is realized by the following explicit perturbation
\[
\mathcal{H}_\epsilon = 
\begin{pmatrix}
1 & x_1 & x_2 & x_1+x_2 \\
x_1 & 1 & y_1 & y_2   \\
x_2 & y_1 & 1 & y_3   \\
x_1+x_2 \! & y_2 & y_3 & 1 
\end{pmatrix}
 + \epsilon 
 \begin{pmatrix}
 7y_1 & 2y_2 & 3y_3 & 0 \\
2y_2 & 5y_3 & 0 & 0 \\
3y_3 & 0 & 3y_1 & 0 \\
0 & 0 & 0 & 5y_2 +y_3
\end{pmatrix}.
\]
For small values $\epsilon > 0$, we find that the branch loci
are irreducible curves of degree $8$ and $30$. These curves
collapse to the six lines when $\epsilon \rightarrow 0$, but
the situation is apparently more complicated than the
nice family $\mathcal{A}_\epsilon$ seen in the previous example. 
The polynomials of degree $8$ and $30$ defining this branch locus can be 
computed purely symbolically in {\tt Macaulay2} \cite{M2},
by way of specialization, elimination and interpolation. 
\end{Exm}

\begin{Exm}\label{Exm:pablo}
Let $S$ denote the hexagon in $\R^2$ with vertices $(\pm \frac12,\pm \sqrt\frac34)$ and $(\pm 1,0)$.
The following is a representation of $S$ as a spectrahedral shadow of type $(4,2,3)$:
\begin{equation}
\label{ex:HamJam}
\left(
\begin{array}[]{cccc}
1 & x_1 & x_2 & y_3 \\
x_1 & \frac12(1+y_1) & \frac12 y_2 & y_1 \\
x_2 & \frac12 y_2 & \frac12 (1-y_1) & -y_2 \\
y_3 & y_1 & -y_2 & 1
\end{array}\right)\,\, \succeq \,\,0.
\end{equation}
This matrix is due to Hamza Fawzi and James Saunderson.

\begin{figure}[h]
\includegraphics[scale = 0.8]{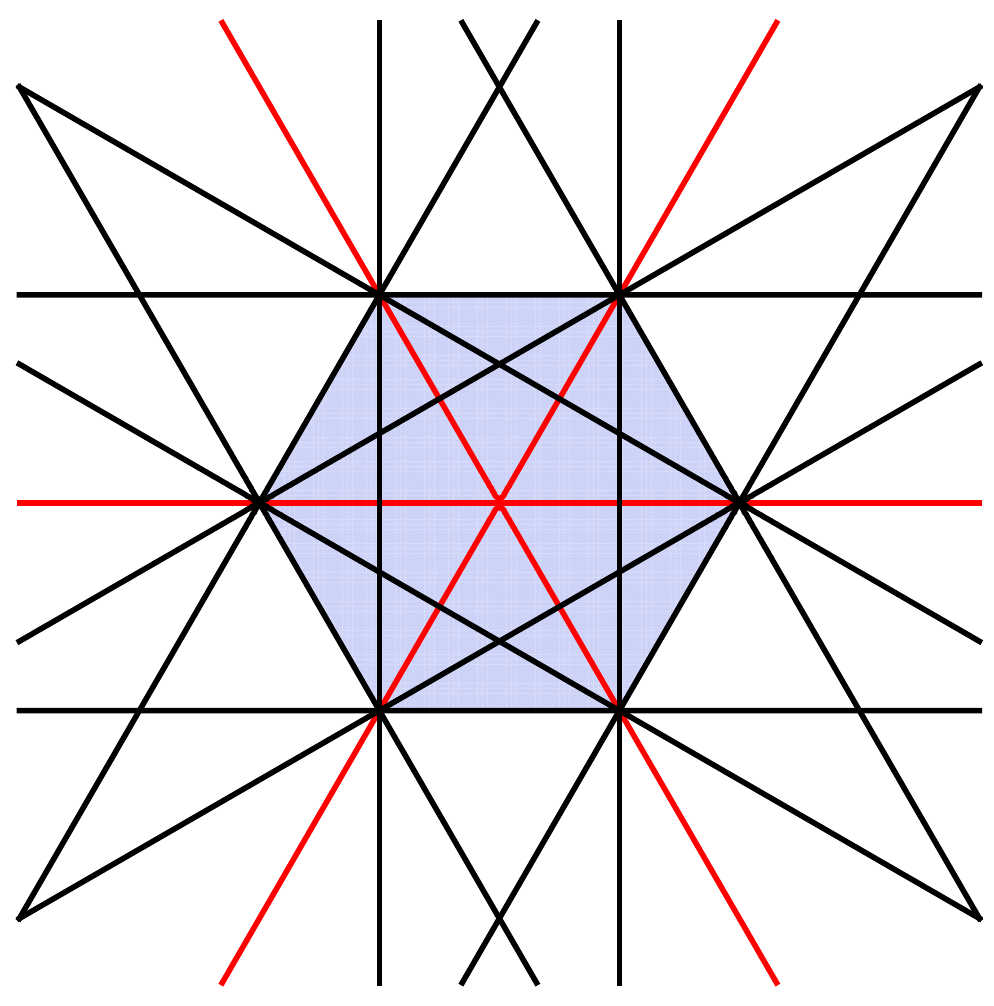}
\vspace{-0.1in}
\caption{A curve of degree $30$ degenerates to the $15$ lines in a hexagon $S$.}
\label{fig:pablo}
\end{figure}

The projection is ramified only along points of rank $2$, {\it i.e.}~there is no smooth point of rank $3$ 
whose tangent space to the determinantal hypersurface contains the kernel of the projection.
The branch locus of rank $2$ is a non-reduced curve of degree $18$:
\begin{equation}\label{eqn:pablohex}
\begin{matrix}
 y^2 (3 x^2-y^2)^2(2 x-1) (2 x+1) (4 y^2-3) (x^2-3 y^2-2 x+1)\cdot & \\
  (x^2-3 y^2+2 x+1) (3 x^2-y^2-6 x+3) (3 x^2-y^2+6x+3) & =\,\,  0.
  \end{matrix}
\end{equation}
This equation factors into linear terms over $\Q[\sqrt{3}]$. The three lines of multiplicity $2$ in this curve are the three lines through the origin.
They are spanned by antipodal pairs of vertices of the hexagon $S$.
The hexagon and its $15$ lines are shown in Figure \ref{fig:pablo}.

A perturbation into general position can be obtained by adding $\epsilon \diag(5y_2,7y_3,0,3x_1)$ to 
the matrix in (\ref{ex:HamJam}). For small $\epsilon > 0$, the algebraic boundary of the
spectrahedral shadow now consists of two irreducible
curves, having degree $8$ and $30$, as predicted in Table \ref{degtab}.
As $\epsilon$ approaches $0$, the polynomial of degree $30$
converges to the polynomial (\ref{eqn:pablohex}) of degree $18$, as expected. 
In the limit, the line at infinity has multiplicity $12$.
\end{Exm}

\bigskip

\noindent \textbf{Acknowledgements.} We thank Pablo Parrilo and James Saunderson for the discussions that started this project, and Kristian Ranestad and Cynthia Vinzant for helpful comments. 
This work was done at 
the National Institute of Mathematical Sciences,
Daejeon, Korea, during the Summer 2014 Program on {\em Applied Algebraic Geometry}.

\bigskip

\bigskip
\medskip

\noindent
\footnotesize {\bf Authors' addresses:}

\noindent Rainer Sinn, Georgia Institute of Technology,
Atlanta, GA 30332, USA,
{\tt rainer.sinn@uni-konstanz.de}

\noindent Bernd Sturmfels, University of California, Berkeley, CA 94720, USA,
{\tt bernd@math.berkeley.edu}
\end{document}